\numberwithin{equation}{section} \numberwithin{figure}{section}
\numberwithin{table}{section} \setlength{\oddsidemargin}{0in}
\theoremstyle{plain}
\theoremstyle{definition}
\newtheorem{thm}{Theorem}
\numberwithin{equation}{section} \numberwithin{lem}{section}
\numberwithin{thm}{section} \numberwithin{cor}{section}
\numberwithin{pro}{section} \numberwithin{rem}{section}
\begin{document}

\title[A new monotonicity formula]
{A new monotonicity formula for solutions to the elliptic system $\Delta u=\nabla W(u) $}



\author{Christos Sourdis} \address{Department of Mathematics and Applied Mathematics, University of
Crete.}
              \email{csourdis@tem.uoc.gr}           




\maketitle

\begin{abstract}
Using a physically motivated stress energy tensor, we prove weak and
strong monotonicity formulas for solutions  to the semilinear
elliptic system $\Delta u=\nabla W(u)$ with $W$ nonnegative. In
particular, we extend a recent two dimensional result of
\cite{smyrnelis} to all dimensions.
\end{abstract}


Consider the semilinear elliptic system
\begin{equation}\label{eqEq}
\Delta u=\nabla W(u)\ \ \textrm{in}\ \ \mathbb{R}^n,\ \ n\geq 1,
\end{equation}
where $W\in C^3(\mathbb{R}^m; \mathbb{R})$, $m\geq 1$, is
\emph{nonnegative}.

In the scalar case, namely $m=1$, Modica \cite{modica} used the
maximum principle to show that every bounded solution to
(\ref{eqEq}) satisfies the pointwise gradient bound
\begin{equation}\label{eqmodica}
\frac{1}{2}|\nabla u|^2\leq W(u) \ \ \textrm{in}\ \ \mathbb{R}^n,
\end{equation}
(see also \cite{cafamodica}). Using this, together with Pohozaev
identities, it was shown in \cite{modicaProc} that  the following
strong monotonicity property holds:
\begin{equation}\label{eqmonotonicity}
\frac{d}{dR}\left(\frac{1}{R^{n-1}}\int_{B_R}^{}\left\{\frac{1}{2}|\nabla
u|^2+ W\left(u\right) \right\}dx\right)\geq 0,\ \ R>0,
\end{equation}
where $B_R$ stands for the $n$-dimensional ball of radius $R$ that
is centered at  $0$ (keep in mind that (\ref{eqEq}) is translation
invariant).

In the vectorial case, that is  $m\geq 2$, in the absence of the
maximum principle, it is not true in general that the gradient bound
(\ref{eqmodica}) holds (see \cite{smyrnelis} for a counterexample).
Nevertheless, it was shown in \cite{alikakosBasicFacts}, using a
physically motivated stress energy tensor,
 that every solution to
(\ref{eqEq}) satisfies the weak monotonicity property:
\begin{equation}\label{eqmonotoniWeak}
\frac{d}{dR}\left(\frac{1}{R^{n-2}}\int_{B_R}^{}\left\{\frac{1}{2}|\nabla
u|^2+ W\left(u\right) \right\}dx\right)\geq 0,\ \ R>0,\ \ n\geq 2,
\end{equation}
where $|\nabla u|^2=\sum_{i=1}^{n}|u_{x_i}|^2$ (for related results,
obtained via Pohozaev identities, see \cite{bethuelBIG},
\cite{caffareliLin} and \cite{riviere}). In fact, as was observed in
\cite{alikakosBasicFacts}, if $u$ additionally  satisfies the vector
analog of Modica's gradient bound (\ref{eqmodica}), we have the
strong monotonicity property (\ref{eqmonotonicity}).

Interesting applications of these formulas can be found in the
aforementioned references. The  importance of monotonicity formulas
in the study of nonlinear partial differential equations is also
highlighted in the recent article \cite{evansMono}.

Recently, it was proven in \cite{smyrnelis} that, if $u$ is a
bounded solution to the scalar problem with $n=2$, we have that
\begin{equation}\label{eqmonotonicitySMYRN}
\frac{d}{dR}\left(\frac{1}{R}\int_{B_R}^{}W\left(u\right)
dx\right)\geq 0,\ \ R>0.
\end{equation}
This was accomplished by deriving an alternative form of the stress
energy tensor for solutions defined in planar domains, and by giving
a geometric interpretation of  Modica's estimate (\ref{eqmodica}).
We emphasize that the interesting techniques in \cite{smyrnelis} are
intrinsically two dimensional and seem hard to generalize to higher
dimensions.

Interestingly enough, in the vector case, it is stated in
\cite{farinatwores} (without proof) that Pohozaev identities imply
that solutions to the Ginzburg-Landau system
\[
\Delta u=\left(|u|^2-1 \right)u,\ \ u:\mathbb{R}^n\to \mathbb{R}^m,
\ \ \left(\textrm{here}\
W(u)=\frac{\left(1-|u|^2\right)^2}{4}\right),
\]
with $n\geq 2, m\geq 2$, satisfy the weak monotonicity property
\begin{equation}\label{eqmonotoniWeakFarina}
\frac{d}{dR}\left(\frac{1}{R^{n-2}}\int_{B_R}^{}\left\{\frac{n-2}{2}|\nabla
u|^2+ n\frac{\left(1-|u|^2\right)^2}{4} \right\}dx\right)\geq 0,\ \
R>0.
\end{equation}

 It is tempting to wonder whether   there is a strong version of
(\ref{eqmonotoniWeakFarina}), that is with $n-1$ in place of $n-2$,
in the scalar case (for any smooth $W\geq 0$), which for $n=2$ gives
(\ref{eqmonotonicitySMYRN}). In this note, by appropriately
modifying the systematic approach of \cite{alikakosBasicFacts}, we
prove the following general result which, in particular, confirms
this connection.

\begin{thm}
If $u\in C^2(\mathbb{R}^n;\mathbb{R}^m)$, $n\geq 2, m\geq 1$, solves
(\ref{eqEq}) with $W\in C^1(\mathbb{R}^m;\mathbb{R})$
\emph{nonnegative}, we have the weak monotonicity formula:
\begin{equation}\label{eqmonotoniMYWEAK}
\frac{d}{dR}\left(\frac{1}{R^{n-2}}\int_{B_R}^{}\left\{\frac{n-2}{2}|\nabla
u|^2+ nW(u) \right\}dx\right)\geq 0,\ \ R>0.
\end{equation}
In addition, if $u$ satisfies Modica's gradient bound, that is
\begin{equation}\label{eqmodicaVector}
\frac{1}{2}|\nabla u|^2\leq W(u) \ \ \textrm{in}\ \ \mathbb{R}^n,
\end{equation}
we have the strong monotonicity formula:
\begin{equation}\label{eqmonotoniMYWEAK}
\frac{d}{dR}\left(\frac{1}{R^{n-1}}\int_{B_R}^{}\left\{\frac{n-2}{2}|\nabla
u|^2+ nW(u) \right\}dx\right)\geq 0,\ \ R>0.
\end{equation}
\end{thm}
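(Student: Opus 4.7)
The plan is to follow the stress energy tensor approach of \cite{alikakosBasicFacts}, but to exploit the nonnegativity of $W$ in a more refined way at the algebraic step where one compares the boundary and volume terms of the resulting Pohozaev-type identity. Concretely, I would work with the symmetric, divergence-free tensor
\[
T_{ij}=u_{x_i}\cdot u_{x_j}-\delta_{ij}\Big(\tfrac12|\nabla u|^2+W(u)\Big),\qquad i,j=1,\dots,n,
\]
and verify that $\partial_j T_{ij}=u_{x_i}\cdot\Delta u-\nabla W(u)\cdot u_{x_i}=0$ using \eqref{eqEq}. This is the same tensor used in \cite{alikakosBasicFacts}; the novelty will come from the way the identity is handled.

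Next I would apply the divergence theorem to the vector field $x_i T_{ij}$ on $B_R$. Since $\partial_j(x_iT_{ij})=\mathrm{tr}(T)=\frac{2-n}{2}|\nabla u|^2-nW(u)$ and the outer normal on $\partial B_R$ is $\nu_j=x_j/R$, the radial identity
\[
\int_{B_R}\Big\{\tfrac{n-2}{2}|\nabla u|^2+nW(u)\Big\}\,dx=R\int_{\partial B_R}\Big\{\tfrac12|\nabla u|^2+W(u)-|u_r|^2\Big\}\,dS
\]
emerges after a short computation using $\sum_i x_iu_{x_i}=R u_r$ on $\partial B_R$. Denote the left-hand side by $I(R)$; then the co-area formula gives $I'(R)=\int_{\partial B_R}\{\tfrac{n-2}{2}|\nabla u|^2+nW(u)\}dS$, and both expressions for $I(R)$ and $I'(R)$ are at hand.

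For the weak statement, I would compute $\frac{d}{dR}(R^{2-n}I(R))=R^{1-n}\bigl(RI'(R)-(n-2)I(R)\bigr)$ and substitute the two formulas above; the $\frac{n-2}{2}|\nabla u|^2$ contributions cancel, leaving
\[
RI'(R)-(n-2)I(R)=R\int_{\partial B_R}\bigl\{2W(u)+(n-2)|u_r|^2\bigr\}\,dS\geq 0,
\]
where one uses only $W\geq 0$ and $n\geq 2$. For the strong statement, the analogous computation with $R^{n-1}$ in the denominator produces
\[
RI'(R)-(n-1)I(R)=R\int_{\partial B_R}\Big\{\Big(W(u)-\tfrac12|\nabla u|^2\Big)+(n-1)|u_r|^2\Big\}\,dS,
\]
which is nonnegative precisely under Modica's bound \eqref{eqmodicaVector}.

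I do not expect a serious obstacle; the whole argument is driven by the one Pohozaev identity above. The one thing to be careful about is the bookkeeping between the volume integrands $\frac{n-2}{2}|\nabla u|^2+nW$ and the surface integrand $\frac12|\nabla u|^2+W-|u_r|^2$, since a small sign or coefficient slip would destroy the clean cancellation that isolates the manifestly nonnegative remainder $2W+(n-2)|u_r|^2$ (or $(W-\tfrac12|\nabla u|^2)+(n-1)|u_r|^2$). This algebraic juggling is, in essence, the whole content of the theorem.
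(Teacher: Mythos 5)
Your proof is correct and follows essentially the same route as the paper: the same divergence-free stress--energy tensor $T$, the same Pohozaev-type identity $\int_{B_R}\mathrm{tr}\,T\,dx=R\int_{\partial B_R}\nu^{\top}T\nu\,dS$, and the same comparison of $f(R)=I(R)$ with $Rf'(R)$. The only (harmless) difference is one of packaging: you keep the boundary term as an exact identity and exhibit the nonnegative remainders $2W(u)+(n-2)|u_r|^2$ and $\bigl(W(u)-\tfrac12|\nabla u|^2\bigr)+(n-1)|u_r|^2$ explicitly, whereas the paper first discards $|u_r|^2\geq 0$ via the matrix inequality (\ref{eqpositiv}) and then invokes the algebraic bound (\ref{eqineq1}) --- your variant has the minor advantage of treating $n=2$ uniformly rather than by a separate remark.
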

\begin{proof}
By means of a direct calculation, it was shown in
\cite{alikakosBasicFacts} that, for solutions $u$ to (\ref{eqEq}),
the stress energy tensor $T(u)$, which is defined as the $n\times n$
matrix with entries
\[
T_{ij}=u_{,i}\cdot u_{,j}- \delta_{ij}\left(\frac{1}{2}|\nabla u|^2+
W\left(u\right) \right),\ \ i,j=1,\cdots,n,\ (\textrm{where}\
u_{,i}=u_{x_i}),\]
 satisfies
\begin{equation}\label{eqdiv}
\textrm{div}T(u)=0,
\end{equation}
using the notation $T=(T_1,T_2,\cdots,T_n)^\top$ and
$\textrm{div}T=(\textrm{div}
T_1,\textrm{div}T_2,\cdots,\textrm{div}T_n)^\top$, (see also
\cite{serfaty}). Observe that
\begin{equation}\label{eqtrace}
\textrm{tr}T=-\left(\frac{n-2}{2}|\nabla u|^2+ nW(u) \right),
\end{equation}
and that
\begin{equation}\label{eqpositiv}
T+\left(\frac{1}{2}|\nabla u|^2+ W\left(u\right) \right) I_n=(\nabla
u)^\top(\nabla u)\geq 0 \ \ \ \textrm{(in the matrix sense)},
\end{equation}
where $I_n$ stands for the $n\times n$ identity matrix.

As in \cite{schoen}, writing $x=(x_1,\cdots,x_n)$, and making use of
(\ref{eqdiv}), we calculate that
\begin{equation}\label{eq1}
\sum_{i,j=1}^{n}\int_{B_R}^{}\left(x_i
T_{ij}\right)_{,j}dx=\sum_{i,j=1}^{n}\int_{B_R}^{}\left\{\delta_{ij}T_{ij}+
x_{i}(T_{ij})_{,j}\right\}dx=\sum_{i=1}^{n}\int_{B_R}^{} T_{ii}dx.
\end{equation}
On the other side, from the divergence theorem, denoting $\nu=x/R$,
and making use of (\ref{eqpositiv}), we find that
\begin{equation}\label{eq2}
\sum_{i,j=1}^{n}\int_{B_R}^{}\left(x_i
T_{ij}\right)_{,j}dx=R\sum_{i,j=1}^{n}\int_{\partial B_R}^{}\nu_i
T_{ij} \nu_j dS\geq -R\int_{\partial B_R}^{}\left(\frac{1}{2}|\nabla
u|^2+ W\left(u\right) \right)dS.
\end{equation}
Since $W$ is nonnegative, if $n\geq 3$, we have that
\begin{equation}\label{eqineq1}
\frac{1}{2}|\nabla u|^2+ W\left(u\right)\leq \frac{1}{n-2}
\left(\frac{n-2}{2}|\nabla u|^2+ n W\left(u\right) \right).
\end{equation}

Let
\[
f(R)=\int_{B_R}^{}\left(\frac{n-2}{2}|\nabla u|^2+ n W\left(u\right)
\right) dx,\ \ R>0.
\]
By combining (\ref{eqtrace}), (\ref{eq1}), (\ref{eq2}) and
(\ref{eqineq1}), for $n\geq 3$, we arrive at
\[
-f (R)\geq -\frac{R}{n-2}\frac{d}{dR}f(R),\ \ R>0,
\]
which implies that
\[
\frac{d}{dR}\left(R^{2-n}f(R) \right)\geq 0,\ \ R>0,
\]
(clearly this also holds for $n=2$). We have thus shown the first
assertion of the theorem.

Suppose that $u$ additionally satisfies Modica's gradient bound
(\ref{eqmodicaVector}). Then, we can strengthen (\ref{eqineq1}), for
$n\geq 2$, by noting that
\[
\frac{1}{2}|\nabla u|^2+ W\left(u\right)= \frac{1}{n-1}
\left(\frac{n-2}{2}|\nabla u|^2+\frac{1}{2}|\nabla u|^2+ (n-1)
W\left(u\right) \right)\leq \frac{1}{n-1} \left(\frac{n-2}{2}|\nabla
u|^2+ n W\left(u\right) \right).
\]
Now, by combining (\ref{eqtrace}), (\ref{eq1}), (\ref{eq2}) and the
above relation, we arrive at
 \[ -f (R)\geq -\frac{R}{n-1}\frac{d}{dR}f(R),\ \ R>0,
\]
which implies that
\[
\frac{d}{dR}\left(R^{1-n}f(R) \right)\geq 0,\ \ R>0,
\]
as desired.
\end{proof}

\end{document}